\def\R{{\mathbb {R}}}
\def\N{{\mathbb {N}}}
\def\A{{\mathcal{A}}}
\def\E{{\mathcal{E}}}
\def\F{{\mathcal{F}}}
\def\ve{\varepsilon}
\newtheorem{teo}{Theorem}[section]
\newtheorem{lema}[teo]{Lemma}
\theoremstyle{remark}
\newtheorem{remark}[teo]{Remark}
\theoremstyle{definition}
\numberwithin{equation}{section}
\title[Uniqueness of minimal energy solutions]{Uniqueness of minimal energy solutions for a semilinear problem involving the fractional laplacian}
\author[J. Fern\'andez Bonder, A. Silva and J. Spedaletti]
{Juli\'an Fern\'andez Bonder, Anal\'{\i}a Silva and Juan Spedaletti}
\address{Juli\'an Fern\'andez Bonder \hfill\break\indent
Departamento  de Matem\'atica, FCEyN, Universidad de Buenos Aires,\hfill\break\indent
Instituto de Matem\'atica Santal\'o (IMAS), CONICET 
\hfill\break\indent Pabell\'on I, Ciudad Universitaria (1428),
Buenos Aires, Argentina.}
\email{{\tt jfbonder@dm.uba.ar}\hfill\break\indent {\it Web page:}
{\tt http://mate.dm.uba.ar/$\sim$jfbonder}}
\address{Anal\'{\i}a Silva \hfill\break\indent
Departamento de Matem\'atica, FCFMyN, Universidad Nacional de San
Luis \hfill\break\indent Instituto de Matem\'atica Aplicada San
Luis, IMASL, CONICET. \hfill\break\indent Italia avenue 1556, office
5, San Luis (5700), San Luis, Argentina.}
\email{{\tt acsilva@unsl.edu.ar}}
\address{Juan Spedaletti \hfill\break\indent
Departamento de Matem\'atica, FCFMyN, Universidad Nacional de San Luis \hfill\break\indent Instituto de Matem\'atica Aplicada San Luis, IMASL, CONICET.
\hfill\break\indent Italia avenue 1556, office 5, San Luis (5700),
San Luis, Argentina.}
\email{{\tt jfspedaletti@unsl.edu.ar}}
\subjclass[2010]{35R11, 35J60}
\keywords{Fractional partial differential equations, uniqueness results}
\begin{document}

\begin{abstract}
In this paper we study a semilinear problem for the fractional laplacian that are the counterpart of the Neumann problems in the classical setting. We show uniqueness of minimal energy solutions for small domains.
\end{abstract}

\maketitle

\section{Introduction}
In recent years there has been an increasing amount of attention to problems involving nonlocal diffusion operators. These problems are so vast that is impossible to give a comprehensive list of references, just to cite a few, see  \cite{DGLZ, Eringen, Giacomin-Lebowitz, Laskin, Metzler-Klafter, Zhou-Du} for some physical models, \cite{Akgiray-Booth, Levendorski, Schoutens} for some applications in finances, \cite{Constantin} for appications in fluid dynamics, \cite{Humphries, Massaccesi-Valdinoci, Reynolds-Rhodes} for application in ecology and \cite{Gilboa-Osher} for some applications in image processing.

Among these applications, one operator that is of particular importance is the {\em fractional laplacian} that is defined (up to some normalization constant) as
$$
(-\Delta)^s u(x) = \text{p.v.} \int_{\Omega} \frac{u(x)-u(y)}{|x-y|^{n+2s}}\, dy,
$$
where p.v. stands for {\em in principal value} and $\Omega\subset\R^n$ is a bounded domain.

This operator is classical and have been studied by several authors. See for instance \cite{Acosta-Borthagaray, BPS, Caffarelli-Silvestre, DelPezzo-Salort, DiNezza-Palatucci-Valdinoci, Demengel, LinLin, Sire-Vazquez-Volzone}, etc.

Recall that this operator is to the so called {\em regional} fractional laplacian that corresponds to Levy processes where the long jumps are restricted to be inside $\Omega$. See, for instance, \cite{Dipierro-RosOton-Valdinoci} for a discussion on this.

This operator is commonly used as a fractional version of the Neumann laplacian. In the literature, there exists several forms of fractional version of the Neumann laplacian. We refer the interested reader again to the article \cite{Dipierro-RosOton-Valdinoci} for more on this.

In this paper, we address the following semilinear problem associated to $(-\Delta)^s$
\begin{equation}\label{1.1}
(-\Delta)^s u + u = \lambda |u|^{q-2}u,\quad \text{in } \Omega.
\end{equation}
This is the fractional counterpart of the classical Neumann problem
\begin{equation}\label{Neumann}
\begin{cases}
-\Delta u + u = \lambda |u|^{q-2}u & \text{in }\Omega\\
\frac{\partial u}{\partial\nu} = 0 & \text{on }\partial\Omega.
\end{cases}
\end{equation}
Here, $q$ is a {\em subcritical exponent} in the sense of the Sobolev embeddings. That is,
$$
1\le q< 2^*_s := \frac{2n}{n-2s}.
$$

The problem can be separated into three different types of behavior: sublinear ($1\le q<2$); linear ($q=2$) and superlinear ($q>2$).

The linear case is by now well understood as an eigenvalue problem and will not be considered here.

For the sublinear and superlinear cases, the parameter $\lambda$ is superfluous since if one gets a solution for some particular value of $\lambda$, then taking a suitable multiple of the solution the value of $\lambda$ can be taken to be 1.

It is fairly easy to see that any solution to \eqref{1.1} is a critical point of the functional
$$
\F(u) := \frac{\frac12 [u]_{s;\Omega}^2 + \|u\|_{2;\Omega}^2}{\|u\|_{q;\Omega}^2},
$$
where 
$$
[u]_{s;\Omega} := \left(\iint_{\Omega\times\Omega} \frac{(u(x)-u(y))^2}{|x-y|^{n+2s}}\, dxdy\right)^\frac12
$$
is the so-called {\em Gagliardo seminorm} of $u$ and, as usual, $\|u\|_{r;\Omega}$ denotes the $L^r(\Omega)-$norm.

By standard variational methods, one can see that there exists {\em minimal energy solutions} to \eqref{1.1}. That is, functions $u\in H^s(\Omega)$ such that
$$
\F(u) = \inf_{v\in H^s(\Omega)} \F(v).
$$
Moreover, by a direct application of the Ljusternik -- Schnirelmann method, one can construct a sequence $\lambda_k\uparrow \infty$ of critical energy levels and a sequence of critical points $\{u_k\}_{k\in\N}$ of $\F$ associated to $\{\lambda_k\}_{k\in\N}$.

Therefore, there exist infinitely many solutions to problem \eqref{1.1}.

\medskip

In this paper, we focus on {\em minimal energy solutions} to \eqref{1.1}. In particular to the multiplicity problem of such solutions.

To this end, inspired by the results of \cite{Bonder-LamiDozo-Rossi}, given a domain $\Omega$ we consider the family of contracted domains
\begin{equation}\label{Omega.mu}
\Omega_\ve := \ve\cdot\Omega = \{\ve x\colon x\in \Omega\} \quad \text{as }\ve\downarrow 0
\end{equation}
and look for the asymptotic behavior of minimal energy solutions in $\Omega_\ve$ as $\ve\downarrow 0$.

We first show that the asymptotic behavior of every minimal energy solution is the same and, using this asymptotic behavior, we are able to conclude the uniqueness of minimal energy solution for contracted domains.

Finally, we give an estimate on the contraction parameter in order to obtain the uniqueness result.

To end this introduction, we want to remark that the same ideas can be used to deal with the Neumann problem \eqref{Neumann}. The changes needed are easy and are left to the interested reader.

We want to recall that uniqueness results for problem \eqref{Neumann} has been considered in the literature before. See for instance \cite{Lin-Ni-Takagi}.

This result is a first step in an investigation which might be pursued in various directions: for instance, taking different Neumann fractional operators, as the one considered in \cite{Dipierro-RosOton-Valdinoci}, or show if the known uniqueness results for the classical Neumann laplacian hold in this situation. We leave these questions for further investigation.

\section{Preliminaries.}

Let $\Omega\subset \R^n$ be a bounded smooth domain and $0<s<1$. The fractional order Sobolev space is defined as
$$
H^s(\Omega) := \left\{u\in L^2(\Omega)\colon \frac{u(x)-u(y)}{|x-y|^{\frac{n}{2}+s}}\in L^2(\Omega\times\Omega)\right\}.
$$
This space is endowed with the norm
$$
\|u\|_{s; \Omega} := ([u]_{s;\Omega}^2 + \|u\|_{2;\Omega}^2)^\frac{1}{2}.
$$

It is well known (see, for instance, \cite{DiNezza-Palatucci-Valdinoci}) that there exists a critical exponent
$$
2^*_s := \begin{cases}
\frac{2n}{n-2s} & \text{if } 2s<n,\\
\infty & \text{otherwise}
\end{cases}
$$
such that for any $1\le q<2^*_s$ the embedding $H^s(\Omega)\subset L^q(\Omega)$ is compact.

We define the Sobolev constant as the number
$$
S(\Omega) = S_{s,q}(\Omega) := \inf_{u\in H^s(\Omega)} \frac{\|u\|_{s;\Omega}^2}{\|u\|_{q;\Omega}^2}.
$$

It is easy to see, as a consequence of the compactness of the embedding, that there exists an extremal for $S(\Omega)$. That is a function $u\in H^s(\Omega)$ where the above infimum is attained.

Also, any extremal for $S(\Omega)$ is a minimal energy (weak) solution to \eqref{1.1}.

The constant $\lambda$ in \eqref{1.1} depends on the normalization of the extremal. For instance, if the extremal $u$ is normalized as $\|u\|_{q;\Omega} = 1$ then $\lambda = S(\Omega)$.

Recall that the operator $(-\Delta)^s$ is a bounded operator between the Sobolev space $H^s(\Omega)$ and its dual $(H^s(\Omega))'$ and can be computed by
$$
\langle (-\Delta)^s u, v\rangle = \frac12 \iint_{\Omega\times\Omega} \frac{(u(x)-u(y))(v(x)-v(y))}{|x-y|^{n+2s}}\, dxdy.
$$

Therefore, for a solution to \eqref{1.1} we mean a function $u\in H^s(\Omega)$ such that
\begin{align*}
\frac12 \iint_{\Omega\times\Omega} \frac{(u(x)-u(y))(v(x)-v(y))}{|x-y|^{n+2s}}\, dxdy + \int_\Omega uv\, dx = \lambda \int_\Omega |u|^{q-2}uv\, dx,
\end{align*}
for every $v\in H^s(\Omega)$.

\section{Asymptotic behavior in thin domains}

Throughout this section we fix the exponent $q\in [1,2^*_s)$, $q\neq 2$.

Our objective in this section is to study the asymptotic behavior of the constant $S(\Omega_\ve)$ as $\ve\downarrow 0$, where the contracted domains $\Omega_\ve$ are given by \eqref{Omega.mu}. 

We begin with a simple estimate.
\begin{lema}\label{lema1}
Under the above notations, we have that
$$
S(\Omega_\ve) \le |\Omega_\ve|^{1-\frac{2}{q}} = \ve^{n(1-\frac{2}{q})} |\Omega|^{1-\frac{2}{q}}.
$$
\end{lema}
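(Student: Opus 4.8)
The plan is to bound the infimum defining $S(\Omega_\ve)$ by evaluating the Rayleigh quotient $\|u\|_{s;\Omega_\ve}^2/\|u\|_{q;\Omega_\ve}^2$ at the cheapest possible competitor, namely a constant function. Taking $u\equiv 1$ on $\Omega_\ve$, the Gagliardo seminorm $[u]_{s;\Omega_\ve}$ vanishes identically, so the numerator reduces to $\|1\|_{2;\Omega_\ve}^2 = |\Omega_\ve|$. The denominator is $\|1\|_{q;\Omega_\ve}^2 = |\Omega_\ve|^{2/q}$. Hence
$$
S(\Omega_\ve) \le \frac{|\Omega_\ve|}{|\Omega_\ve|^{2/q}} = |\Omega_\ve|^{1-\frac{2}{q}}.
$$

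To finish, I would rewrite $|\Omega_\ve|$ in terms of $|\Omega|$. Since $\Omega_\ve = \ve\cdot\Omega$ is the image of $\Omega$ under the dilation $x\mapsto \ve x$, whose Jacobian is $\ve^n$, we have $|\Omega_\ve| = \ve^n |\Omega|$. Substituting this into the displayed bound gives
$$
S(\Omega_\ve) \le (\ve^n |\Omega|)^{1-\frac{2}{q}} = \ve^{n(1-\frac{2}{q})} |\Omega|^{1-\frac{2}{q}},
$$
which is exactly the asserted inequality.

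There is essentially no obstacle here: the only point worth a moment's care is that the constant function $u\equiv 1$ genuinely lies in $H^s(\Omega_\ve)$ — it belongs to $L^2(\Omega_\ve)$ since $\Omega_\ve$ is bounded, and its difference quotient vanishes, so it trivially sits in the Sobolev space — and that $q\neq 2$ is only needed to make the exponent $1-\frac{2}{q}$ nonzero and the estimate nontrivial, while the inequality itself holds for all admissible $q$. No compactness or regularity input is used; this is purely a matter of testing against constants.
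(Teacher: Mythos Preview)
Your proof is correct and is exactly the approach the paper takes: it simply tests the Rayleigh quotient defining $S(\Omega_\ve)$ against the constant function $u\equiv 1$. The only difference is that you spell out the norm computations and the scaling $|\Omega_\ve|=\ve^n|\Omega|$, whereas the paper leaves these implicit.
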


\begin{proof}
The lemma follows just by taking $u=1$ as a test function in the definition of $S(\Omega_\ve)$.
\end{proof}

Now we want to be more precise. We need to show that the asymptotic behavior of $S(\Omega_\ve)$ is given precisely by $\ve^{n(1-\frac{2}{q})}$ and also to study the behavior of the associated extremals.
\begin{lema}\label{lema2}
Under the above notations, we have that
$$
\lim_{\ve\downarrow 0} \frac{S(\Omega_\ve)}{\ve^{n(1-\frac{2}{q})}} = |\Omega|^{1-\frac{2}{q}}.
$$
Moreover if $u_\ve\in H^s(\Omega_\ve)$ is an extremal for $S(\Omega_\ve)$, the rescaled extremals $\bar u_\ve(x) := u_\ve(\ve x)$ normalized such that $\|\bar u_\ve\|_{q;\Omega} = 1$ verify that
$$
\bar u_\ve \to |\Omega|^{-\frac{1}{q}}\qquad \text{strongly in } H^s(\Omega). 
$$
\end{lema}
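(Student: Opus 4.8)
The plan is to bound $S(\Omega_\ve)$ from below by an expression with the right rate and to extract compactness from the normalized rescaled extremals. First I would rescale everything to the fixed domain $\Omega$. If $u_\ve$ is an extremal for $S(\Omega_\ve)$ and $\bar u_\ve(x) := u_\ve(\ve x)$, then a change of variables gives $[u_\ve]_{s;\Omega_\ve}^2 = \ve^{n-2s}[\bar u_\ve]_{s;\Omega}^2$, $\|u_\ve\|_{2;\Omega_\ve}^2 = \ve^n \|\bar u_\ve\|_{2;\Omega}^2$ and $\|u_\ve\|_{q;\Omega_\ve}^2 = \ve^{2n/q}\|\bar u_\ve\|_{q;\Omega}^2$. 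Hence
$$
S(\Omega_\ve) = \frac{\ve^{n-2s}[\bar u_\ve]_{s;\Omega}^2 + \ve^n\|\bar u_\ve\|_{2;\Omega}^2}{\ve^{2n/q}\|\bar u_\ve\|_{q;\Omega}^2},
$$
so that, dividing by $\ve^{n(1-2/q)}$ and using the normalization $\|\bar u_\ve\|_{q;\Omega}=1$,
$$
\frac{S(\Omega_\ve)}{\ve^{n(1-\frac{2}{q})}} = \ve^{-2s}[\bar u_\ve]_{s;\Omega}^2 + \|\bar u_\ve\|_{2;\Omega}^2.
$$
Together with Lemma \ref{lema1} this already yields $\ve^{-2s}[\bar u_\ve]_{s;\Omega}^2 + \|\bar u_\ve\|_{2;\Omega}^2 \le |\Omega|^{1-\frac{2}{q}}$ for all small $\ve$.

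Next I would exploit this inequality. It gives simultaneously that $\{\bar u_\ve\}$ is bounded in $H^s(\Omega)$ (so up to a subsequence $\bar u_\ve \rightharpoonup \bar u$ weakly in $H^s(\Omega)$ and strongly in $L^q(\Omega)$ by compactness of the embedding, whence $\|\bar u\|_{q;\Omega}=1$) and that $[\bar u_\ve]_{s;\Omega}^2 \le |\Omega|^{1-\frac{2}{q}}\,\ve^{2s} \to 0$. Lower semicontinuity of the seminorm under weak convergence then forces $[\bar u]_{s;\Omega}=0$, i.e. $\bar u$ is (a.e.) constant; combined with $\|\bar u\|_{q;\Omega}=1$ and $|\Omega\text{-rescaled}|=|\Omega|$ this pins down $\bar u \equiv |\Omega|^{-1/q}$. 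In particular the limit is independent of the subsequence, so the full family converges weakly.

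For the limit of the quotient, the lower bound is the point where care is needed. From the displayed identity, $S(\Omega_\ve)\,\ve^{-n(1-2/q)} \ge \|\bar u_\ve\|_{2;\Omega}^2$, and since $\bar u_\ve \to \bar u$ strongly in $L^2(\Omega)$ (as $2<2^*_s$, this is again the compact embedding, or interpolation from the $L^q$ convergence and the $H^s$ bound), $\|\bar u_\ve\|_{2;\Omega}^2 \to \|\bar u\|_{2;\Omega}^2 = |\Omega|^{-2/q}|\Omega| = |\Omega|^{1-2/q}$. Combined with the upper bound from Lemma \ref{lema1} this gives $\lim_{\ve\downarrow 0} S(\Omega_\ve)\ve^{-n(1-2/q)} = |\Omega|^{1-2/q}$. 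Finally, to upgrade $\bar u_\ve \to \bar u$ to \emph{strong} convergence in $H^s(\Omega)$, I note that $\|\bar u_\ve\|_{2;\Omega}^2 \to \|\bar u\|_{2;\Omega}^2$ and $[\bar u_\ve]_{s;\Omega}^2 \to 0 = [\bar u]_{s;\Omega}^2$, so $\|\bar u_\ve\|_{s;\Omega}^2 \to \|\bar u\|_{s;\Omega}^2$; together with weak convergence in the Hilbert space $H^s(\Omega)$ this yields norm convergence, hence strong convergence.

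The main obstacle is making the lower bound for $S(\Omega_\ve)$ genuinely sharp rather than merely of the right order: one must be sure that the mass of the rescaled extremals does not escape in $L^2$ or concentrate, which is exactly what the a priori bound $[\bar u_\ve]_{s;\Omega}^2 = O(\ve^{2s})$ together with the compact embedding $H^s(\Omega)\hookrightarrow L^2(\Omega)$ rules out. Everything else is the routine rescaling bookkeeping above.
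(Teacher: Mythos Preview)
Your argument is correct and follows essentially the same route as the paper: rescale to the fixed domain, use Lemma \ref{lema1} for the upper bound and to force $[\bar u_\ve]_{s;\Omega}\to 0$, extract weak/compact limits to identify $\bar u=|\Omega|^{-1/q}$, and get the lower bound from $\|\bar u_\ve\|_{2;\Omega}^2\to |\Omega|^{1-2/q}$. If anything, you are slightly more careful than the paper: you make explicit that the limit is independent of the subsequence (so the full family converges), and you justify the \emph{strong} $H^s(\Omega)$ convergence via weak convergence plus convergence of norms, a step the paper's proof leaves implicit.
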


\begin{proof}
First, observe that for $v\in H^s(\Omega_\ve)$, if we denote $\bar v(x) = v(\ve x)$, then $\bar v\in H^s(\Omega)$. Moreover, $[v]_{s; \Omega_\ve} = \ve^{\frac{n}{2}-s} [\bar v]_{s;\Omega}$ and $\|v\|_{r;\Omega_\ve} = \ve^\frac{n}{r} \|\bar v\|_{r;\Omega}$ for $1\le r<2^*_s$. Therefore
$$
\frac{\|v\|_{s;\Omega_\ve}^2}{\|v\|_{q;\Omega_\ve}^2} = \ve^{n(1-\frac{2}{q})} \frac{\ve^{-2s}[\bar v]_{s;\Omega}^2 + \|\bar v\|_{2;\Omega}^2}{\|\bar v\|_{q;\Omega}^2}.
$$

Now, let $u_\ve\in H^s(\Omega_\ve)$ be an extremal for $S(\Omega_\ve)$ and let $\bar u_\ve(x) = u_\ve(\ve x)$. Then,
$$
S(\Omega_\ve) =  \ve^{n(1-\frac{2}{q})} \frac{\ve^{-2s}[\bar u_\ve]_{s;\Omega}^2 + \|\bar u_\ve\|_{2;\Omega}^2}{\|\bar u_\ve\|_{q;\Omega}^2}.
$$

Now, by Lemma \ref{lema1}, it follows that
\begin{equation}\label{estimate1}
\frac{\ve^{-2s}[\bar u_\ve]_{s;\Omega}^2 + \|\bar u_\ve\|_{2;\Omega}^2}{\|\bar u_\ve\|_{q;\Omega}^2}\le |\Omega|^{1-\frac{2}{q}}.
\end{equation}
Let us now fix the normalization of the extremal $u_\ve$ such that $\|\bar u_\ve\|_{q;\Omega}=1$, and by \eqref{estimate1}, we obtain that $\bar u_\ve$ is bounded in $H^s(\Omega)$ uniformly on $\ve>0$. So, there exists $\bar u\in H^s(\Omega)$ such that (up to some sequence $\ve_k\to 0$),
\begin{align}
\label{convWsp}&\bar u_\ve \rightharpoonup \bar u \qquad \text{weakly in } H^s(\Omega),\\
\label{convLr}&\bar u_\ve \to \bar u \qquad \text{strongly in } L^r(\Omega) \text{ for any } 1\le r < 2^*_s.
\end{align}
Also, from \eqref{estimate1} and \eqref{convWsp}, we have that
$$
[\bar u]_{s;\Omega}\le \liminf_{\ve\downarrow 0}\, [\bar u_\ve]_{s;\Omega} = 0, 
$$
therefore $\bar u$ is constant and, since $\|\bar u_\ve\|_{q;\Omega}=1$, from \eqref{convLr} we obtain that $\|\bar u\|_{q;\Omega}=1$.

All these together imply that $\bar u = |\Omega|^{-\frac{1}{q}}$.

From these estimates, one easily concludes that
\begin{align*}
|\Omega|^{1-\frac{2}{q}} &\le \liminf_{\ve\downarrow 0} \ve^{-2s}[\bar u_\ve]_{s;\Omega}^2 + \|\bar u_\ve\|_{2;\Omega}^2 \\
&= \liminf_{\ve\downarrow 0} \frac{S(\Omega_\ve)}{\ve^{n(1-\frac{2}{q})}}\le \limsup_{\ve\downarrow 0} \frac{S(\Omega_\ve)}{\ve^{n(1-\frac{2}{q})}} \le |\Omega|^{1-\frac{2}{q}}.
\end{align*}
The proof is complete.
\end{proof}

\section{Uniqueness of extremals for small domains}

In this section we show the uniqueness of extremals if the domain is contracted enough.

For that purpose, observe that if $u_\ve$ is an extremal for $S(\Omega_\ve)$ and $\bar u_\ve$ is the rescaled extremal normalized as $\|\bar u_\ve\|_{q;\Omega}=1$, then $\bar u_\ve$ is a weak solution of the problem
\begin{equation}\label{ecuacion.2}
(-\Delta)^s u + \ve^{2s} u = \ve^{2s} \lambda_\ve |u|^{q-2}u \qquad \text{in }\Omega,
\end{equation}
where $\lambda_\ve = S(\Omega_\ve) \ve^{-n(1-\frac{2}{q})}$. Recall also, that $0\le \lambda_\ve \le |\Omega|^{1-\frac{2}{q}}$ (c.f. Lemma \ref{lema1}).

So, we define the space
$$
\E := \{u\in H^s(\Omega)\colon \|u\|_{q;\Omega} = 1\}.
$$
It is easy to see that $\E$ is a $C^1$ manifold.

We then define $F\colon \E\times [0,1)\to (H^s(\Omega))'$ as
\begin{align*}
\langle F(u,\ve), v\rangle := &\frac12\iint_{\Omega\times\Omega} \frac{(u(x)-u(y))(v(x)-v(y))}{|x-y|^{n+2s}}\, dxdy\\
& + \ve^{2s}\int_\Omega uv\, dx - \ve^{2s}\lambda_\ve \int_\Omega |u|^{q-2}uv\, dx.
\end{align*}

Denote $u_0 = |\Omega|^{-\frac{1}{q}}\in \E$ and observe that $F(u_0,0)=0$.

Following the ideas of \cite{Bonder-LamiDozo-Rossi} we want to use the Implicit Function Theorem (IFT) to show the existence of a small number $\delta>0$ and a curve $\phi\colon [0,\delta)\to \E$ such that
$$
\phi(0)=u_0\quad \text{and}\quad F(\phi(\ve),\ve) = 0 \ \text{for every } 0\le \ve<\delta,
$$
and if $(u,\ve)\in \E\times [0,\delta)$ is such that $F(u,\ve)=0$ and $u$ is close to $u_0$ then $u=\phi(\ve)$.

Observe that if we can apply the IFT then, combining this with Lemma \ref{lema2}, automatically we obtain the uniqueness of extremals of $S(\Omega_\ve)$ for $\ve$ small.

In order to be able to apply the IFT we need to check that $d_u F|_{(u_0,0)}$ is invertible (see \cite{Ambrosetti-Prodi} or \cite{Upmeier}). Recall that since $F$ is define on a manifold, the derivative is defined on the tangent space of $\E$ at the point $u_0$.  

Let us begin with a couple of lemmas.
\begin{lema}\label{tangente}
The tangent space of $\E$ at $u_0$, that we denote by $T_{u_0}\E$ is given by
$$
T_{u_0}\E = \left\{v\in H^s(\Omega)\colon \int_\Omega v\, dx = 0\right\}.
$$
\end{lema}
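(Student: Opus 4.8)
The plan is to recognize $\E$ as a regular level set of a $C^1$ functional on $H^s(\Omega)$ and to identify its tangent space at $u_0$ with the kernel of the corresponding differential.

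\textbf{Step 1: $\E$ as a level set.} Define $G\colon H^s(\Omega)\to\R$ by $G(u):=\|u\|_{q;\Omega}^q=\int_\Omega|u|^q\,dx$, so that $\E=G^{-1}(\{1\})$. Since $H^s(\Omega)$ embeds continuously into $L^q(\Omega)$ and $t\mapsto|t|^q$ is $C^1$ on $\R$ for $q>1$, the functional $G$ is of class $C^1$ with
$$
\langle dG(u),v\rangle = q\int_\Omega|u|^{q-2}u\,v\,dx .
$$
(For the borderline value $q=1$ one argues the needed differentiability directly near $u_0$ by dominated convergence, using that $u_0$ is bounded away from $0$; indeed the assertion, already made above, that $\E$ is a $C^1$ manifold is exactly the statement that $1$ is a regular value of $G$, which is verified in the next step.)

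\textbf{Step 2: $u_0$ is a regular point and computation of the kernel.} Evaluate the differential at $u_0=|\Omega|^{-1/q}$. Because $u_0$ is a strictly positive constant, $|u_0|^{q-2}u_0=u_0^{q-1}=|\Omega|^{-\frac{q-1}{q}}$, whence
$$
\langle dG(u_0),v\rangle = q\,|\Omega|^{-\frac{q-1}{q}}\int_\Omega v\,dx\qquad\text{for all }v\in H^s(\Omega).
$$
This is a bounded linear functional on $H^s(\Omega)$, since $\left|\int_\Omega v\,dx\right|\le|\Omega|^{1/2}\|v\|_{2;\Omega}\le|\Omega|^{1/2}\|v\|_{s;\Omega}$, and it is nonzero, since testing with $v\equiv1$ gives $q|\Omega|^{1/q}\neq0$. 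Hence $dG(u_0)$ is surjective onto $\R$, so $u_0$ is a regular point, and by the submersion form of the implicit function theorem $\E$ is a $C^1$ submanifold near $u_0$ with $T_{u_0}\E=\ker dG(u_0)$, which is precisely $\{v\in H^s(\Omega)\colon\int_\Omega v\,dx=0\}$, as claimed.

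\textbf{Step 3: alternative hands-on check, and the main obstacle.} If one prefers to avoid the submersion theorem, the two inclusions can be verified directly. For ``$\subseteq$'': any $w\in T_{u_0}\E$ is $\gamma'(0)$ for some $C^1$ curve $\gamma\colon(-\delta,\delta)\to\E$ with $\gamma(0)=u_0$, and differentiating the identity $G(\gamma(t))\equiv1$ at $t=0$ gives $\int_\Omega w\,dx=0$. For ``$\supseteq$'': given $v\in H^s(\Omega)$ with $\int_\Omega v\,dx=0$, set $\gamma(t):=(u_0+tv)/\|u_0+tv\|_{q;\Omega}$, well defined for $|t|$ small since $\|u_0+tv\|_{q;\Omega}\ge1-|t|\,\|v\|_{q;\Omega}$; then $\gamma$ is a $C^1$ curve in $\E$ with $\gamma(0)=u_0$, and since $\frac{d}{dt}\|u_0+tv\|_{q;\Omega}\big|_{t=0}=u_0^{q-1}\int_\Omega v\,dx=0$, the quotient rule yields $\gamma'(0)=v$, so $v\in T_{u_0}\E$. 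The only genuinely delicate point in either route is the differentiability of $G$ (equivalently the $C^1$ manifold structure of $\E$) as $q\downarrow1$, where $t\mapsto|t|^q$ loses smoothness at the origin; this is harmless here because $u_0$ is a positive constant, so the singularity never enters the relevant computations. Everything else is routine.
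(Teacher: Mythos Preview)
Your proof is correct. In fact, your Step~3 is essentially the paper's own argument: the paper proves ``$\subseteq$'' by differentiating $\int_\Omega |\alpha(t)|^q\,dx=1$ along a curve $\alpha$ with $\alpha(0)=u_0$, $\dot\alpha(0)=v$, and proves ``$\supseteq$'' via the explicit curve $\alpha(t)=(u_0+tv)/\|u_0+tv\|_{q;\Omega}$, exactly as you do. Your Steps~1--2 add a second, more abstract route---identifying $\E=G^{-1}(1)$ for $G(u)=\|u\|_{q;\Omega}^q$ and invoking the regular-level-set theorem---which the paper does not spell out; this has the advantage of making the $C^1$ manifold structure of $\E$ (asserted without proof in the paper) explicit, at the cost of citing the submersion form of the implicit function theorem. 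Both approaches lead to the same computation of $dG(u_0)$, so the difference is purely one of packaging.
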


\begin{proof}
Let $v\in T_{u_0}\E$. Then, there exists a differentiable curve, $\alpha\colon (-1,1)\to \E$ such that $\alpha(0)=u_0$ and $\dot\alpha(0)=v$.

But, since $\alpha(t)\in \E$ for every $t\in (-1,1)$ it follows that
$$
\int_\Omega |\alpha(t)|^q\, dx = 1 \quad \text{for every } t\in (-1,1).
$$

Differentiating both sides of the equality gives
$$
\int_\Omega q |\alpha(t)|^{q-2}\alpha(t)\dot\alpha(t)\, dx = 0.
$$
So, if we evaluate at $t=0$ and recall that $u_0$ is constant, we obtain that 
\begin{equation}\label{v=0}
\int_\Omega v\, dx = 0.
\end{equation}

On the other hand, if $v\in H^s(\Omega)$ verifies \eqref{v=0}, we construct the curve $\alpha\colon (-1,1)\to\E$ as
$$
\alpha(t) = \frac{u_0 + tv}{\|u_0+tv\|_q}.
$$
Straightforward computations show that $\alpha(0)=u_0$ and $\dot\alpha(0)=v$.
\end{proof}

Now, we denote $\A = (\text{span}\{1\})^\perp = \{f\in (H^s(\Omega))'\colon \langle f, 1\rangle = 0\}$.

\begin{lema}\label{rango}
We have that 
$$
d_u F|_{(u_0,0)}\colon T_{u_0}\E\to \A.
$$
Moreover, the following expression holds
$$
\langle d_u F|_{(u_0,0)}(u), v\rangle = \frac12 \iint_{\Omega\times\Omega} \frac{(u(x)-u(y))(v(x)-v(y))}{|x-y|^{n+2s}}\, dxdy.
$$
\end{lema}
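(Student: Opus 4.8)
The plan is to note that putting $\ve=0$ in the definition of $F$ annihilates the two terms carrying the factor $\ve^{2s}$, so that $F(\cdot,0)$ is nothing but the restriction to $\E$ of the bounded \emph{linear} operator $L\colon H^s(\Omega)\to (H^s(\Omega))'$ given by
$$
\langle L(u),v\rangle := \frac12\iint_{\Omega\times\Omega}\frac{(u(x)-u(y))(v(x)-v(y))}{|x-y|^{n+2s}}\,dxdy = \langle(-\Delta)^s u,v\rangle .
$$
That $L$ is well defined and bounded follows from Cauchy--Schwarz, $|\langle L(u),v\rangle|\le [u]_{s;\Omega}[v]_{s;\Omega}\le \|u\|_{s;\Omega}\|v\|_{s;\Omega}$. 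A bounded linear map is everywhere Fr\'echet differentiable and coincides with its own derivative; since $F(\cdot,0)=L|_{\E}$, the derivative of $F(\cdot,0)$ along the $C^1$ manifold $\E$ at the point $u_0$ is simply the restriction of $L$ to the tangent space $T_{u_0}\E$, which is exactly the asserted formula.

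Spelled out, I would argue as follows. Fix $u\in T_{u_0}\E$ and, as in the proof of Lemma~\ref{tangente}, choose a differentiable curve $\alpha\colon(-1,1)\to\E$ with $\alpha(0)=u_0$ and $\dot\alpha(0)=u$. For each fixed $v\in H^s(\Omega)$ the scalar function $t\mapsto\langle F(\alpha(t),0),v\rangle=\langle L(\alpha(t)),v\rangle$ is linear in $\alpha(t)$, the integrand being linear in $\alpha(t)$; hence by the chain rule for (bounded) linear maps its derivative at $t=0$ equals $\langle L(\dot\alpha(0)),v\rangle=\langle L(u),v\rangle$. As $v\in H^s(\Omega)$ was arbitrary this gives
$$
\langle d_u F|_{(u_0,0)}(u),v\rangle=\frac12\iint_{\Omega\times\Omega}\frac{(u(x)-u(y))(v(x)-v(y))}{|x-y|^{n+2s}}\,dxdy .
$$

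To finish, I must check that the image of $d_u F|_{(u_0,0)}$ lies in $\A=(\text{span}\{1\})^\perp$, i.e. $\langle d_u F|_{(u_0,0)}(u),1\rangle=0$ for every $u\in T_{u_0}\E$. This is immediate from the formula just obtained: taking $v\equiv 1$, the difference $v(x)-v(y)$ vanishes identically, so the double integral is zero. Observe that this last step uses only the structure of the operator $L$ and not the defining constraint $\int_\Omega u\,dx=0$ of $T_{u_0}\E$; that constraint will instead be what makes $d_u F|_{(u_0,0)}$ onto $\A$, which is the point of the next lemma.

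In truth there is no real difficulty in this statement; the only point to treat with a little care is the identification of the derivative of the manifold-valued map $F(\cdot,0)$ with the restriction of the ambient linear operator $L$, and this is legitimate precisely because $L$ is linear and defined on all of $H^s(\Omega)\supset\E$, so no differentiation under the integral sign beyond the trivial linear case intervenes. The substantial work — showing that this operator is invertible in the appropriate sense, so that the Implicit Function Theorem can be applied — is deferred to what follows.
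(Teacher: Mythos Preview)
Your proof is correct and follows the same idea as the paper's own argument: observe that $F(u,0)$ reduces to the linear map $\langle L(u),v\rangle=\tfrac12\iint_{\Omega\times\Omega}\frac{(u(x)-u(y))(v(x)-v(y))}{|x-y|^{n+2s}}\,dxdy$, so its differential is $L$ itself restricted to the tangent space. The paper states this in two lines, while you spell out the curve argument and explicitly verify the range lies in $\A$ by testing against $v\equiv1$; this is extra detail rather than a different route.
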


\begin{proof}
To prove the Lemma, first observe that
$$
\langle F(u,0), v\rangle =  \frac12 \iint_{\Omega\times\Omega} \frac{(u(x)-u(y))(v(x)-v(y))}{|x-y|^{n+2s}}\, dxdy,
$$
for every $u\in \E$.

From this expression the lemma follows.
\end{proof}

It remains to see that $d_u F|_{(u_0,0)}$ has a continuous inverse.
\begin{lema}\label{inversa.continua}
The derivative $d_u F|_{(u_0,0)}\colon T_{u_0}\E\to \A$ has a continuous inverse.
\end{lema}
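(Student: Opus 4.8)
The strategy is to realize $d_uF|_{(u_0,0)}$ as the operator associated to a bounded, coercive bilinear form on the Hilbert space $V := T_{u_0}\E = \{v\in H^s(\Omega)\colon \int_\Omega v\,dx = 0\}$ (with the norm inherited from $H^s(\Omega)$) and then invoke the Lax--Milgram theorem. By Lemma \ref{rango}, for $u\in V$ and $v\in H^s(\Omega)$ we have $\langle d_uF|_{(u_0,0)}(u),v\rangle = a(u,v)$, where
$$
a(u,v) := \frac12\iint_{\Omega\times\Omega}\frac{(u(x)-u(y))(v(x)-v(y))}{|x-y|^{n+2s}}\,dxdy .
$$
This form is symmetric and bounded on $H^s(\Omega)\times H^s(\Omega)$, since $|a(u,v)|\le \frac12[u]_{s;\Omega}[v]_{s;\Omega}\le \frac12\|u\|_{s;\Omega}\|v\|_{s;\Omega}$ by Cauchy--Schwarz. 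The crux is the coercivity on $V$: there must exist $c>0$ such that $a(v,v) = \frac12[v]_{s;\Omega}^2\ge c\|v\|_{s;\Omega}^2$ for all $v\in V$.

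This coercivity amounts to a fractional Poincar\'e--Wirtinger inequality, $\|v\|_{2;\Omega}\le C[v]_{s;\Omega}$ for every $v\in V$, which I would establish by the standard compactness--contradiction argument. If it failed, there would be $v_k\in V$ with $\|v_k\|_{2;\Omega}=1$ and $[v_k]_{s;\Omega}\to 0$; then $\{v_k\}$ is bounded in $H^s(\Omega)$, so along a subsequence $v_k\rightharpoonup v$ weakly in $H^s(\Omega)$ and $v_k\to v$ strongly in $L^2(\Omega)$ by the compact embedding recalled in Section~2. Weak lower semicontinuity of the Gagliardo seminorm gives $[v]_{s;\Omega}\le\liminf_k[v_k]_{s;\Omega}=0$, so $v$ is constant (here the connectedness of the domain $\Omega$ is used), while $\int_\Omega v\,dx=0$ forces $v\equiv 0$, contradicting $\|v\|_{2;\Omega}=1$. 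Adding $[v]_{s;\Omega}^2$ to both sides of the Poincar\'e inequality then yields the estimate above with $c = (1+C^2)^{-1}/2$.

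Granting coercivity, Lax--Milgram furnishes, for each $g\in V'$, a unique $u_g\in V$ with $a(u_g,v)=\langle g,v\rangle$ for all $v\in V$ and $\|u_g\|_{s;\Omega}\le c^{-1}\|g\|_{V'}$. To match the target space $\A = (\text{span}\{1\})^\perp$, use the topological direct sum $H^s(\Omega)=V\oplus\text{span}\{1\}$: every $f\in\A$ restricts to an element of $V'$, and since $a(u,1)=0$ for all $u$, the solution $u_f$ produced by Lax--Milgram satisfies $a(u_f,v)=\langle f,v\rangle$ for \emph{all} $v\in H^s(\Omega)$, i.e. $d_uF|_{(u_0,0)}(u_f)=f$; conversely $d_uF|_{(u_0,0)}$ is injective on $V$ because $\langle d_uF|_{(u_0,0)}(u),u\rangle = \frac12[u]_{s;\Omega}^2=0$ with $u\in V$ forces $u\equiv 0$. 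Hence $d_uF|_{(u_0,0)}\colon V\to\A$ is a continuous bijection whose inverse is bounded by $c^{-1}$, which is the claim. The only genuinely nontrivial ingredient is the Poincar\'e--Wirtinger inequality; everything else is soft functional analysis, with the compact embedding $H^s(\Omega)\subset L^2(\Omega)$ doing the real work.
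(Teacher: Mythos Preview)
Your proof is correct and follows essentially the same route as the paper: both rest on the fractional Poincar\'e--Wirtinger inequality to see that the Gagliardo form gives $T_{u_0}\E$ an equivalent Hilbert structure, and then invoke a representation theorem (the paper uses Riesz directly, you use Lax--Milgram, which for this symmetric form is the same thing). Your write-up is simply more explicit---you actually prove the Poincar\'e inequality and sort out the identification of $\A$ with $V'$---whereas the paper's argument is a two-line sketch that leaves these points implicit.
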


\begin{proof}
First observe that $T_{u_0}\E$ is a Hilbert space with inner product given by
$$
(u,v) = \langle d_u F|_{(u_0,0)}(u), v\rangle =  \frac12 \iint_{\Omega\times\Omega} \frac{(u(x)-u(y))(v(x)-v(y))}{|x-y|^{n+2s}}\, dxdy.
$$
So, the Lemma follows from the Riesz representation Theorem.
\end{proof}

Combining Lemmas \ref{tangente}, \ref{rango}, \ref{inversa.continua} we are in position to apply the IFT and conclude the main result of the section
\begin{teo}\label{existe.mu0}
Given $\Omega\subset\R^n$ smooth and of finite measure and $1< q <2^*_s$, there exists $\delta>0$ such that $S(\Omega_\ve)$ has a unique extremal for $0<\ve<\delta$.
\end{teo}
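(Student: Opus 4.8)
The plan is to combine the Implicit Function Theorem with the compactness provided by Lemma \ref{lema2}. The set-up has already been arranged: we have the $C^1$ map $F\colon \E\times[0,1)\to (H^s(\Omega))'$ whose zeros (after rescaling) are exactly the extremals, we know $F(u_0,0)=0$ with $u_0=|\Omega|^{-1/q}$, and Lemmas \ref{tangente}, \ref{rango} and \ref{inversa.continua} show that $d_uF|_{(u_0,0)}\colon T_{u_0}\E\to\A$ is a linear homeomorphism. The first thing to check, which I would state explicitly before invoking the IFT, is that $F$ is indeed $C^1$ on $\E\times[0,1)$: the Gagliardo term is bounded bilinear hence smooth, the term $\ve^{2s}\int_\Omega uv$ is smooth in $(u,\ve)$, and the term $\ve^{2s}\lambda_\ve\int_\Omega|u|^{q-2}uv$ is $C^1$ in $u$ because $q>1$ makes $t\mapsto|t|^{q-2}t$ continuously differentiable and the Sobolev embedding $H^s(\Omega)\hookrightarrow L^q(\Omega)$ is compact; one also needs $\ve\mapsto\lambda_\ve$ to be at least continuous near $\ve=0$, which follows from Lemma \ref{lema2} together with the definition $\lambda_\ve=S(\Omega_\ve)\ve^{-n(1-2/q)}$ extended by $\lambda_0=|\Omega|^{1-2/q}$. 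This is exactly where the hypothesis $q>1$ enters and is the reason the statement excludes $q=1$.

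With $C^1$ regularity and the invertibility of $d_uF|_{(u_0,0)}$ in hand, the IFT on Banach manifolds (as in \cite{Ambrosetti-Prodi}) yields $\delta_1>0$, a neighbourhood $U$ of $u_0$ in $\E$, and a $C^1$ curve $\phi\colon[0,\delta_1)\to\E$ with $\phi(0)=u_0$, $F(\phi(\ve),\ve)=0$ for all $\ve\in[0,\delta_1)$, and the uniqueness property that any $(u,\ve)\in U\times[0,\delta_1)$ with $F(u,\ve)=0$ satisfies $u=\phi(\ve)$. So locally near $u_0$ there is exactly one zero of $F(\cdot,\ve)$ for each small $\ve$.

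The remaining, and genuinely essential, step is to upgrade this \emph{local} uniqueness to \emph{global} uniqueness among all extremals: a priori there could be an extremal $u_\ve$ whose rescaling $\bar u_\ve$ lies outside $U$. This is precisely what Lemma \ref{lema2} rules out. Suppose, for contradiction, that there is a sequence $\ve_k\downarrow 0$ and, for each $k$, two distinct extremals of $S(\Omega_{\ve_k})$; passing to the rescaled normalized functions we get $\bar u_{\ve_k}$ with $\|\bar u_{\ve_k}\|_{q;\Omega}=1$ solving \eqref{ecuacion.2}, i.e. $F(\bar u_{\ve_k},\ve_k)=0$. Lemma \ref{lema2} forces $\bar u_{\ve_k}\to u_0=|\Omega|^{-1/q}$ strongly in $H^s(\Omega)$ for every such sequence of extremals. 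Hence for $k$ large all these rescaled extremals lie in $U$, and the local uniqueness from the IFT gives $\bar u_{\ve_k}=\phi(\ve_k)$, contradicting that there were two distinct ones. Taking $\delta\le\delta_1$ small enough that every rescaled extremal for $\ve<\delta$ sits in $U$ (again possible by Lemma \ref{lema2}, since otherwise one could extract a contradicting sequence), we conclude that $S(\Omega_\ve)$ has a unique extremal for $0<\ve<\delta$. The main obstacle is this passage from local to global, and the key tool that makes it work is the strong $H^s$-convergence of \emph{every} family of rescaled extremals to the single constant $u_0$, established in Lemma \ref{lema2}; without that one would only get uniqueness in a neighbourhood of $u_0$, not uniqueness outright.
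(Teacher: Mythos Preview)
Your proposal is correct and follows exactly the route sketched in the paper: apply the IFT at $(u_0,0)$ using Lemmas \ref{tangente}--\ref{inversa.continua} to obtain local uniqueness of the zero of $F(\cdot,\ve)$ near $u_0$, and then use the strong $H^s$-convergence of all rescaled extremals to $u_0$ from Lemma \ref{lema2} to force every extremal into the neighbourhood where the IFT gives uniqueness. Your write-up simply makes explicit the steps the paper compresses into ``a direct consequence of the IFT and the remarks made at the beginning of the section'', including the $C^1$ regularity of $F$ and the local-to-global passage.
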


\begin{proof}
At this point the proof is a direct consequence of the IFT and the remarks made at the beginning of the section.
\end{proof}

\section{Estimates for the contraction parameter}

Let us first define 
$$
\ve_0 = \sup\{\delta>0\colon \exists ! \text{ normalized extremal for } S(\Omega_\ve) \ \forall \ve<\delta\}.
$$
From the results of the previous section, we know that $\ve_0>0$. We now want to find a lower bound for $\ve_0$.

\begin{lema}\label{lema.clave}
There exists $u_0\in H^s(\Omega_{\ve_0})$ an extremal for $S(\Omega_{\ve_0})$, such that the rescaled function $\bar u_0(x) = u_0(\ve_0 x)$ normalized as $\|\bar u_0\|_{q;\Omega}=1$ verifies that $d_u F|_{(\bar u_0,\ve_0)}$ is not invertible.
\end{lema}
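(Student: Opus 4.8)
The plan is to argue by contradiction, playing the definition of $\ve_0$ as a supremum against the Implicit Function Theorem (IFT) in the form already used in Section~4. First I would record two preliminary facts. For $\ve\in[0,1)$ let $\M_\ve\subset\E$ be the zero set of $F(\cdot,\ve)$; pairing $F(u,\ve)$ with $u$ itself and using $\|u\|_{q;\Omega}=1$ shows that every $u\in\M_\ve$ attains the infimum defining $\lambda_\ve=S(\Omega_\ve)\ve^{-n(1-\frac{2}{q})}$, so that $\M_\ve$ is exactly the set of rescaled normalized extremals for $S(\Omega_\ve)$, and by the definition of $\ve_0$ it reduces to a single element $\bar u_\ve$ (up to sign) for every $\ve<\ve_0$. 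Secondly, by the compact embedding $H^s(\Omega)\subset L^q(\Omega)$ the set $\M_{\ve_0}$ is nonempty, and by the rescaling and weak lower semicontinuity arguments in the proof of Lemma~\ref{lema2} it is compact in $H^s(\Omega)$; more generally, if $\ve_k\to\ve_0$ and $w_k\in\M_{\ve_k}$, a subsequence of $\{w_k\}$ converges strongly in $H^s(\Omega)$ to a point of $\M_{\ve_0}$ (one also uses here the continuity of $\ve\mapsto\lambda_\ve$, which follows from the same minimization characterization). In particular $\ve\mapsto\bar u_\ve$ is continuous on $(0,\ve_0)$.

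Now I would suppose, towards a contradiction, that $d_uF|_{(\bar u,\ve_0)}$ is invertible for every $\bar u\in\M_{\ve_0}$, and apply the IFT (exactly as in Section~4, but based at $(\bar u,\ve_0)$): for each $\bar u\in\M_{\ve_0}$ there are $\eta_{\bar u}>0$, a neighbourhood $W_{\bar u}$ of $\bar u$ in $\E$ and a $C^1$ curve $\phi_{\bar u}\colon(\ve_0-\eta_{\bar u},\ve_0+\eta_{\bar u})\to\E$ with $\phi_{\bar u}(\ve_0)=\bar u$, $F(\phi_{\bar u}(\ve),\ve)=0$, and $(\phi_{\bar u}(\ve),\ve)$ the only zero of $F$ in $W_{\bar u}\times(\ve_0-\eta_{\bar u},\ve_0+\eta_{\bar u})$. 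Then each $\bar u$ is isolated in $\M_{\ve_0}$, so the compact set $\M_{\ve_0}$ is finite. Since the set of subsequential limits of $\bar u_\ve$ as $\ve\uparrow\ve_0$ is a connected subset of a finite set, it is a single point $\bar u_0$, hence $\bar u_\ve\to\bar u_0$ and $\bar u_\ve=\phi_{\bar u_0}(\ve)$ for $\ve$ close to $\ve_0$ from below. Moreover any other $\bar v\in\M_{\ve_0}$ gives a curve $\phi_{\bar v}$ of zeros of $F$, hence of extremals, so $\phi_{\bar v}(\ve)\in\M_\ve=\{\pm\phi_{\bar u_0}(\ve)\}$ for $\ve<\ve_0$ near $\ve_0$, and a connectedness argument on that interval forces $\bar v=\pm\bar u_0$; thus $\M_{\ve_0}=\{\pm\bar u_0\}$. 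Finally, the compactness recorded above shows that for $\ve$ in a neighbourhood of $\ve_0$ every element of $\M_\ve$ lies in $W_{\bar u_0}\cup W_{-\bar u_0}$, so it equals $\pm\phi_{\bar u_0}(\ve)$ by local uniqueness; hence $\M_\ve$ is a single element (up to sign) for all $\ve$ in a full neighbourhood of $\ve_0$, in particular for some $\ve>\ve_0$, contradicting the definition of $\ve_0$. Therefore some $\bar u_0\in\M_{\ve_0}$ has $d_uF|_{(\bar u_0,\ve_0)}$ non-invertible, and $u_0(x):=\bar u_0(x/\ve_0)$ is the extremal of $S(\Omega_{\ve_0})$ claimed.

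I expect the delicate point to be not the IFT itself but the passage from its purely local conclusion to uniqueness on a whole neighbourhood of $\ve_0$: a priori $\M_{\ve_0}$ could contain several extremals, each spawning its own branch of solutions. What rules this out is that nondegeneracy at every point of $\M_{\ve_0}$ makes $\M_{\ve_0}$ discrete, hence finite, so that the connected $\omega$-limit set of the branch coming from below collapses to a single point and forces every other point of $\M_{\ve_0}$ onto the same branch. The supporting ingredients — strong $H^s(\Omega)$-compactness of families of rescaled extremals as $\ve\to\ve_0$, and continuity of $\ve\mapsto S(\Omega_\ve)\ve^{-n(1-\frac{2}{q})}$ — are routine adaptations of the proof of Lemma~\ref{lema2}.
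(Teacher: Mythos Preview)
Your proof is correct and follows the same overall architecture as the paper: argue by contradiction, apply the IFT at every rescaled extremal at $\ve_0$, use the uniqueness for $\ve<\ve_0$ to force $\M_{\ve_0}$ to collapse to a single point (up to sign), and then combine IFT local uniqueness with compactness of families $\{\bar u_{\ve_k}\}$ to push uniqueness past $\ve_0$, contradicting the definition of $\ve_0$. The paper's argument is somewhat more direct in one place: to show $\M_{\ve_0}$ is a singleton it simply takes two hypothetical extremals $\bar u_0\neq\bar u_1$, produces two IFT curves $\phi_0,\phi_1$, and observes that for $\ve<\ve_0$ both $\phi_0(\ve)$ and $\phi_1(\ve)$ are extremals, hence coincide (up to sign), so by continuity $\bar u_0=\pm\bar u_1$; you instead pass through ``$\M_{\ve_0}$ discrete $\Rightarrow$ finite $\Rightarrow$ the connected $\omega$-limit of $\ve\mapsto\bar u_\ve$ is a single point'', which works but is a longer detour. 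On the other hand, you are more explicit than the paper about the key fact that zeros of $F(\cdot,\ve)$ in $\E$ are exactly the rescaled normalized extremals (via pairing with $u$), a point the paper uses without comment.
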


\begin{proof}
Assume the oposite. 

We first claim that there is a unique extremal for $S(\Omega_{\ve_0})$. Otherwise, if $u_0\neq u_1$ are extremals such that the rescaled functions $\bar u_i(x) = u_i(\ve_0 x)$, $i=0,1$ normalized as $\|\bar u_i\|_{q;\Omega}=1$, $i=0,1$, verify that $d_u F|_{(\bar u_i,\ve_0)}$ is invertible for $i=0,1$. But then, by the IFT, there exists $\delta>0$ and two curves $\phi_i\colon (\ve_0-\delta, \ve_0+\delta)\to \E$ such that
$$
F(\phi_i(\ve),\ve)=0,\quad \text{for every }\ve\in (\ve_0-\delta, \ve_0+\delta),\ i=0,1.
$$
But this contradicts the uniqueness of extremals for $\ve<\ve_0$.

Now, let $\ve_k>\ve_0$ be such that $\ve_k\to\ve_0$ as $k\to\infty$ and let $u_k$ be an extremal for $S(\Omega_{\ve_k})$. We normalized these extremals so that the rescaled functions $\bar u_k(x) = u_k(\ve_k x)$ verify $\|\bar u_k\|_{q; \Omega}=1$.

We want to see that $\{\bar u_k\}_{k\in\N}$ converges to $\bar u_0$ that is the rescaled function of the unique extremal $u_0$ for $S(\Omega_{\ve_0})$.

In fact, it is immediate to see that $\sup_{k\in\N} \|\bar u_k\|_{s;\Omega}<\infty$, and so, up to a subsequence, there exists $\bar w\in H^s(\Omega)$ such that
\begin{align*}
& \bar u_k \rightharpoonup \bar w \quad \text{weakly in } H^s(\Omega)\\
& \bar u_k \to \bar w \quad \text{strongly in } L^r(\Omega), \text{ for every } 1\le r<2^*_s.
\end{align*}
From these convergence results it follows that
$$
1=\|\bar u_k\|_{q;\Omega}\to \|\bar w\|_{q;\Omega}
$$
and
$$
\ve_0^{-2s} [\bar w]_{s,2;\Omega}^2 + \|\bar w\|_{2;\Omega}^2 \le \liminf_{k\to\infty} \ve_k^{-2s} [\bar u_k]_{s,2;\Omega}^2 + \|\bar u_k\|_{2;\Omega}^2 =\liminf_{k\to\infty} \frac{S(\Omega_{\ve_k})}{\lambda_{\ve_k}}.
$$

Now, let $u_0\in H^s(\Omega_{\ve_0})$ be the unique extremal for $S(\Omega_{\ve_0})$ normalized such that the rescaled function $\bar u_0$ satisfies $\|\bar u_0\|_{q;\Omega}=1$.

Then,
$$
\limsup_{k\to\infty}\frac{S(\Omega_{\ve_k})}{\lambda_{\ve_k}} \le \limsup_{k\to\infty} \ve_k^{-2s} [\bar u_0]_{s,2;\Omega}^2 + \|\bar u_0\|_{2;\Omega}^2 = \ve_0^{-2s} [\bar u_0]_{s,2;\Omega}^2 + \|\bar u_0\|_{2;\Omega}^2 =\frac{S(\Omega_{\ve_0})}{\lambda_{\ve_0}}
$$

These two inequalities combined imply that $w(x) = \bar w(\ve_0^{-1}x)$ is an extremal for $S(\Omega_{\ve_0})$ and so $w=u_0$.

Now, since we are assuming that $d_u F|_{(u_0,\ve_0)}$ is invertible, we can apply the IFT as in the proof of Theorem \ref{existe.mu0} to conclude that for some $\delta>0$ there is a unique extremal for $S(\Omega_\ve)$ for $\ve<\ve_0+\delta$. But this contradicts the definition of $\ve_0$.
\end{proof}

\begin{remark}\label{rem.clave}
By a simple application of the Fredholm's alternative, it follows that $d_u F|_{(u_0,\ve_0)}$ is not invertible if and only if it has a nontrivial kernel.
\end{remark}

The following Poincar\'e-type inequality plays an important role in the bound of $\ve_0$
\begin{lema}\label{Poincare}
Let $\Omega\subset\R^n$ be an open, smooth and of finite measure. Let $0<s<1$ and $1\le q<2^*_s$. Then, there exists $c>0$, that depends on $q,s$ and $\Omega$, such that
$$
c \|w\|_{q;\Omega}^2 \le \frac12 [w]_{s;\Omega}^2,
$$
for every $w\in H^s(\Omega)$ such that $\int_\Omega w\, dx = 0$.
\end{lema}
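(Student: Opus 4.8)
\emph{Plan.} I would prove this by a standard compactness--contradiction argument, using only the compact embedding $H^s(\Omega)\subset L^r(\Omega)$ for $1\le r<2^*_s$ recalled in Section~\ref{...}. Suppose the inequality fails. Then for every $k\in\N$ there is $w_k\in H^s(\Omega)$ with $\int_\Omega w_k\,dx=0$ and $\frac1k\|w_k\|_{q;\Omega}^2>\frac12[w_k]_{s;\Omega}^2$; in particular $w_k\neq 0$ in $L^q(\Omega)$, so after normalizing $\|w_k\|_{q;\Omega}=1$ we get $[w_k]_{s;\Omega}^2<2/k\to 0$.

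The first step is to show that $\{w_k\}$ is bounded in $H^s(\Omega)$, i.e. that $\{\|w_k\|_{2;\Omega}\}$ is bounded; since $[w_k]_{s;\Omega}\to 0$, this is the only thing to check. If $q\ge 2$ it is immediate from H\"older's inequality and $|\Omega|<\infty$, namely $\|w_k\|_{2;\Omega}\le|\Omega|^{\frac12-\frac1q}$. If $1\le q<2$ I would argue again by contradiction: if along a subsequence $t_k:=\|w_k\|_{2;\Omega}\to\infty$, set $v_k:=w_k/t_k$, so that $\|v_k\|_{2;\Omega}=1$, $\int_\Omega v_k\,dx=0$ and $[v_k]_{s;\Omega}=[w_k]_{s;\Omega}/t_k\to 0$. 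Then $\{v_k\}$ is bounded in $H^s(\Omega)$, hence up to a subsequence $v_k\cd v$ weakly in $H^s(\Omega)$ and $v_k\to v$ strongly in $L^2(\Omega)$ (the embedding is compact, as $2<2^*_s$ always). Thus $\|v\|_{2;\Omega}=1$; by weak lower semicontinuity of the Gagliardo seminorm (the map $w\mapsto (w(x)-w(y))|x-y|^{-n/2-s}$ is bounded linear into $L^2(\Omega\times\Omega)$), $[v]_{s;\Omega}\le\liminf_k[v_k]_{s;\Omega}=0$, so $v$ is constant; and since $|\Omega|<\infty$ the $L^2$ convergence gives $\int_\Omega v\,dx=\lim_k\int_\Omega v_k\,dx=0$, forcing $v\equiv 0$ and contradicting $\|v\|_{2;\Omega}=1$. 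Hence $\{\|w_k\|_{2;\Omega}\}$ is bounded.

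Now $\{w_k\}$ is bounded in $H^s(\Omega)$, so up to a subsequence $w_k\cd w$ weakly in $H^s(\Omega)$ and $w_k\to w$ strongly in $L^q(\Omega)$, whence $\|w\|_{q;\Omega}=1$. Again by weak lower semicontinuity, $[w]_{s;\Omega}\le\liminf_k[w_k]_{s;\Omega}=0$, so $w$ is constant; and since $q\ge 1$ and $|\Omega|<\infty$ we also have $w_k\to w$ in $L^1(\Omega)$, so $\int_\Omega w\,dx=\lim_k\int_\Omega w_k\,dx=0$. A constant with zero integral over $\Omega$ must vanish, so $w\equiv 0$, contradicting $\|w\|_{q;\Omega}=1$. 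This proves the claimed inequality.

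I do not expect a genuinely hard step: the argument is routine compactness. The only point that needs a little care is the sublinear range $1\le q<2$, where the $L^q$-normalization does not by itself control $\|w_k\|_{2;\Omega}$, which is exactly why the auxiliary rescaling sub-argument is inserted; everything else rests on the compact Sobolev embedding, weak lower semicontinuity of $[\,\cdot\,]_{s;\Omega}$, and $|\Omega|<\infty$.
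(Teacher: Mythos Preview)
Your argument is correct and is exactly the ``standard compactness argument'' that the paper invokes and then omits; there is nothing further to compare. The only nontrivial point---controlling $\|w_k\|_{2;\Omega}$ when $1\le q<2$---you handle cleanly with the auxiliary rescaling sub-argument, and the rest is routine use of the compact embedding and weak lower semicontinuity of $[\,\cdot\,]_{s;\Omega}$.
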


\begin{proof}
The proof follows by a standard compactness argument and is omitted.
\end{proof}

We are ready to prove the main result of the section.
\begin{teo}\label{cota.mu0}
Under the notations and assumptions of the section, we have that
$$
\ve_0\ge \left(\frac{c}{(q-1) |\Omega|^{1-\frac{2}{q}}}\right)^\frac{1}{2s},
$$
where $c>0$ is the constant in the Poincar\'e-type inequality of Lemma \ref{Poincare}.
\end{teo}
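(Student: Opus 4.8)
The plan is to exploit the degeneracy furnished by Lemma~\ref{lema.clave}, convert it into a linear identity for the corresponding kernel element, and then squeeze that identity between H\"older's inequality, the crude bound $0\le\lambda_\ve\le|\Omega|^{1-\frac{2}{q}}$ of Lemma~\ref{lema1}, and the Poincar\'e-type inequality of Lemma~\ref{Poincare}. By Lemma~\ref{lema.clave} there is an extremal $u_0$ for $S(\Omega_{\ve_0})$ whose normalized rescaling $\bar u_0\in\E$ makes $d_uF|_{(\bar u_0,\ve_0)}$ non-invertible, and by Remark~\ref{rem.clave} this operator then has a nontrivial kernel; fix $v\in T_{\bar u_0}\E$ with $v\neq0$ and $d_uF|_{(\bar u_0,\ve_0)}(v)=0$. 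Recalling the description of the tangent space in Lemma~\ref{tangente}, $v$ has zero mean, $\int_\Omega v\,dx=0$, which is precisely the admissibility hypothesis in Lemma~\ref{Poincare}.

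First I would differentiate the formula for $\langle F(u,\ve),w\rangle$ in the $u$ variable, which gives
\begin{align*}
\langle d_uF|_{(\bar u_0,\ve_0)}(v),w\rangle ={}& \frac12\iint_{\Omega\times\Omega}\frac{(v(x)-v(y))(w(x)-w(y))}{|x-y|^{n+2s}}\,dxdy\\
&+ \ve_0^{2s}\int_\Omega vw\,dx - \ve_0^{2s}\lambda_{\ve_0}(q-1)\int_\Omega|\bar u_0|^{q-2}vw\,dx,
\end{align*}
i.e.\ the natural extension of the identity in Lemma~\ref{rango} to the point $(\bar u_0,\ve_0)$. The kernel condition says this expression vanishes for every admissible $w$; taking $w=v$ and discarding the nonnegative term $\ve_0^{2s}\|v\|_{2;\Omega}^2$ leaves
$$
\frac12[v]_{s;\Omega}^2 \le \ve_0^{2s}\,\lambda_{\ve_0}\,(q-1)\int_\Omega|\bar u_0|^{q-2}v^2\,dx.
$$

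Next I would bound the right-hand side. By H\"older's inequality with exponents $\frac{q}{q-2}$ and $\frac{q}{2}$ and the normalization $\|\bar u_0\|_{q;\Omega}=1$,
$$
\int_\Omega|\bar u_0|^{q-2}v^2\,dx \le \|\bar u_0\|_{q;\Omega}^{q-2}\,\|v\|_{q;\Omega}^2 = \|v\|_{q;\Omega}^2,
$$
and since $\lambda_{\ve_0}\le|\Omega|^{1-\frac{2}{q}}$ by Lemma~\ref{lema1}, this gives
$$
\frac12[v]_{s;\Omega}^2 \le \ve_0^{2s}\,(q-1)\,|\Omega|^{1-\frac{2}{q}}\,\|v\|_{q;\Omega}^2 .
$$
Finally, Lemma~\ref{Poincare} applied to $v$ (legitimate since $\int_\Omega v\,dx=0$) yields $c\,\|v\|_{q;\Omega}^2\le\frac12[v]_{s;\Omega}^2$; since $v\neq0$ forces $\|v\|_{q;\Omega}>0$, dividing by $\|v\|_{q;\Omega}^2$ produces $c\le\ve_0^{2s}(q-1)|\Omega|^{1-\frac{2}{q}}$, which is exactly the claimed lower bound for $\ve_0$.

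The main obstacle --- indeed the only genuinely non-routine point --- is the very first step: one must make sure that the degeneracy supplied by Lemma~\ref{lema.clave} produces a kernel element $v$ with zero mean, so that Lemma~\ref{Poincare} is applicable. When the degenerate extremal $\bar u_0$ equals the constant $|\Omega|^{-\frac{1}{q}}$ this is immediate from Lemma~\ref{tangente}; in general one either checks that this is the relevant situation at the threshold $\ve_0$, or replaces Lemma~\ref{Poincare} by a weighted Poincar\'e inequality tailored to $T_{\bar u_0}\E$. A secondary, minor issue is the H\"older step when $1<q<2$, where $|\bar u_0|^{q-2}$ is singular on the zero set of $\bar u_0$ and one controls $\int_\Omega|\bar u_0|^{q-2}v^2\,dx$ using a pointwise bound of $|v|$ by a multiple of $|\bar u_0|$; the remaining manipulations are pure bookkeeping.
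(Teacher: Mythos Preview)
Your argument is essentially identical to the paper's: the paper also invokes Lemma~\ref{lema.clave} and Remark~\ref{rem.clave} to produce a nonzero kernel element $z$, tests the linearized equation with $z$ itself, and then chains Lemma~\ref{Poincare}, Lemma~\ref{lema1}, and H\"older's inequality to reach $c\le\ve_0^{2s}(q-1)|\Omega|^{1-\frac{2}{q}}$.

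Two remarks on the caveats you raise. First, your invocation of Lemma~\ref{tangente} to conclude $\int_\Omega v\,dx=0$ is not literally justified, since that lemma computes the tangent space only at the constant $u_0=|\Omega|^{-1/q}$; at a general $\bar u_0\in\E$ the tangent constraint is $\int_\Omega |\bar u_0|^{q-2}\bar u_0\,v\,dx=0$. You flag this honestly as ``the only genuinely non-routine point,'' and in fact the paper's own proof simply asserts $\int_\Omega z\,dx=0$ in \eqref{eq.z} without further comment, so your write-up is, if anything, more scrupulous than the original on this point. Second, your worry about the H\"older step for $1<q<2$ is also not addressed in the paper; it applies H\"older with exponents $\tfrac{q}{q-2}$ and $\tfrac{q}{2}$ as you do, tacitly assuming $q>2$.
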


\begin{proof}
By Lemma \ref{lema.clave}, there exists $u_0$ an extremal for $S(\Omega_{\ve_0})$ such that the rescaled function $\bar u_0$ normalized such that $\|\bar u_0\|_{q;\Omega}=1$ verifies that $d_u F|_{(\bar u_0,\ve_0)}$ is not invertible.

Moreover, by Remark \ref{rem.clave}, $d_u F|_{(\bar u_0,\ve_0)}$ has a nontrivial kernel.

Let $0\neq z\in \ker(d_u F|_{(\bar u_0,\ve_0)})$, then $z$ is a nontrivial weak solution to the problem
\begin{equation}\label{eq.z}
\begin{cases}
(-\Delta)^s z + \ve_0^{2s} z = \ve_0^{2s} \lambda_{\ve_0} (q-1) |\bar u_0|^{q-2}z & \text{in }\Omega\\
\int_\Omega z\, dx = 0.
\end{cases}
\end{equation}

Using $z$ as a test function in the weak formulation of \eqref{eq.z} gives
\begin{equation}\label{test.z}
\frac12 [z]_{s,2;\Omega}^2 + \ve_0^{2s} \|z\|_{2;\Omega}^2 = \ve_0^{2s} \lambda_{\ve_0} (q-1)\int_\Omega |\bar u_0|^{q-2} z^2\, dx.
\end{equation}

Now, we use the Poincar\'e-type inequality of Lemma \ref{Poincare}, Lemma \ref{lema1} and H\"older's inequality to deduce from \eqref{test.z} that
$$
c\|z\|_{q;\Omega}^2 \le \ve_0^{2s} |\Omega|^{1-\frac{2}{q}} (q-1) \|\bar u_0\|_{q;\Omega}^{q-2} \|z\|_{q;\Omega}^2.
$$
Finally, recalling that $\|\bar u_0\|_{q;\Omega}=1$ and that $z\neq 0$ we arrive that the desired result.
\end{proof}

\section*{Acknowledgements}
This paper was partially supported by grants UBACyT 20020130100283BA, CONICET PIP 11220150100032CO and ANPCyT PICT 2012-0153. 

J. Fern\'andez Bonder and A. Silva are members of CONICET. 

This work started while JFB was visiting the National University of San Luis. He wants to thank the Math Department and the IMASL for the hospitality that make the visit so enjoyable.

\bibliographystyle{amsplain}
\bibliography{biblio}

\end{document}